\title[Moduli b-divisors of lc-trivial fibrations]{On the moduli b-divisors of lc-trivial fibrations\\(Sur les b-diviseurs de modules des fibrations lc-triviales)
}
\author{Osamu Fujino}
\author{Yoshinori Gongyo}
\date{2014/4/9, version 1.42}
\subjclass[2010]{Primary 14N30, 14E30; Secondary 14J10.}
\keywords{semi-stable minimal model program (programme des mod\`eles minimaux  semi-stables), canonical bundle formulae (formules de fibr\'e canoniques), lc-trivial fibrations (fibrations lc-triviales), klt-trivial fibrations (fibrations klt-triviales)}
\address{Department of Mathematics, Faculty of Science,
Kyoto University, Kyoto 606-8502, Japan}
\email{fujino@math.kyoto-u.ac.jp}
\address{Graduate School of Mathematical Sciences, 
The University of Tokyo, 3-8-1 Komaba, 
Meguro, Tokyo, 153-8914 Japan.}
\email{gongyo@ms.u-tokyo.ac.jp}
\address{Department of Mathematics, Imperial College London, 180 Queen's Gate, London SW7 2AZ, UK}
\email{y.gongyo@imperial.ac.uk}
\newcommand{\mult}[0]{\operatorname{mult}}
\newcommand{\rank}[0]{\operatorname{rank}}
\newcommand{\Supp}[0]{\operatorname{Supp}}
\newcommand{\Spec}[0]{\operatorname{Spec}}
\newtheorem{thm}{Theorem}[section]
\newtheorem*{claim}{Claim}
\theoremstyle{definition}
\newtheorem{defn}[thm]{Definition}
\newtheorem{rem}[thm]{Remark}
\newtheorem*{ack}{Acknowledgments}
\newtheorem{say}[thm]{}
\begin{document}
\bibliographystyle{amsalpha+}

\maketitle

\begin{abstract}
Roughly speaking, by using the semi-stable minimal 
model program, we prove that the moduli part of an lc-trivial fibration coincides 
with that of a klt-trivial fibration induced by adjunction after taking a suitable generically finite cover. 
As an application, we obtain that the moduli part of an lc-trivial fibration 
is b-nef and abundant by Ambro's result on klt-trivial fibrations. 

\bigskip

{\noindent \textsc{R\'esum\'e}.} Grosso modo, en utilisant le programme des mod\`eles  minimaux semi-stables, nous montrons que la partie modulaire d'une fibration lc-triviale co\"incide 
avec celle d'une fibration klt-triviale induite par adjonction apr\'es changement de base  par un morphisme g\'en\'eriquement fini. 
Comme application, eu utilisant le r\'esultat de Ambro sur fibrations klt-triviales, on obtient que la partie modulaire d'une fibration lc-triviale est b-nef et abondante.
\end{abstract}

\tableofcontents
\section{Introduction}

In this paper, we prove the following theorem. 
More precisely, we reduce Theorem \ref{main} to 
Ambro's result (see \cite[Theorem 3.3]{ambro2}) by using 
the semi-stable minimal model program (see, for example, \cite{fujino3}). 
For a related result, see \cite[Theorem 1.4]{floris}. 

\begin{thm}[{cf.~\cite[Theorem 3.3]{ambro2}}]\label{main} 
Let $f:X\to Y$ be a projective surjective 
morphism between normal projective varieties with connected fibers. 
Assume that $(X, B)$ is log canonical and $K_X+B\sim _{\mathbb Q, Y}0$. Then 
the moduli $\mathbb Q$-b-divisor $\mathbf M$ is b-nef and abundant. 
\end{thm}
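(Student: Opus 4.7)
The plan is to reduce Theorem \ref{main} to Ambro's theorem (\cite[Theorem 3.3]{ambro2}), which handles the klt-trivial case, following the strategy indicated in the abstract. The heart of the argument is to replace the lc-trivial fibration $f:(X,B)\to Y$ by a klt-trivial fibration arising from adjunction to a suitable lc center, after a generically finite base change.

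First, I would carry out standard birational reductions: pass to a log resolution so that $(X,B)$ is log smooth, and use flattening and resolution to arrange that $f$ has a well-behaved generic structure. Next, I would invoke the semi-stable minimal model program relative to $Y$ (as developed in \cite{fujino3}), possibly after a generically finite cover $\pi:Y'\to Y$ obtained from a Kawamata-type covering trick. The output is a birational model $(X',B')\to Y'$ of the pulled-back lc-trivial fibration in which the lc centers behave in a controlled, semi-stable fashion; in particular, one can arrange that there exists an lc center $V$ of $(X',B')$ which is horizontal over $Y'$, for instance by choosing a minimal lc center meeting a general fiber.

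The key step is then adjunction along $V$: the restriction of $K_{X'}+B'$ to $V$ yields a log canonical pair $(V,B_V)$ whose induced fibration $g:V\to Y'$ is klt-trivial, since the minimality of $V$ as an lc center strictly improves discrepancies. The central technical point is to verify that the moduli $\mathbb{Q}$-b-divisor $\mathbf{M}_g$ of $g$ agrees with $\pi^{*}\mathbf{M}$ on $Y'$, where $\mathbf{M}$ is the moduli part of the original lc-trivial fibration. This comparison should be carried out by writing the two canonical bundle formulae on a common log resolution and matching the discriminant contributions term by term; the semi-stable property of the chosen model ensures that no spurious vertical correction terms appear, and the minimality of $V$ ensures that the moduli part is genuinely preserved by adjunction.

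Once this identification is established, Ambro's theorem applied to $g$ gives that $\mathbf{M}_g=\pi^{*}\mathbf{M}$ is b-nef and abundant on $Y'$. Since b-nefness and abundance can be tested on a sufficiently high birational model and descend under the generically finite cover $\pi$, we conclude that $\mathbf{M}$ itself is b-nef and abundant on $Y$. The main obstacle I anticipate is precisely the compatibility of the moduli b-divisors under the combined operation of semi-stable MMP, generically finite base change, and adjunction to the chosen lc center: one must track the discriminant divisor at every step of the semi-stable reduction and verify that the horizontal and vertical contributions to the canonical bundle formula on $X'$ match those on $V$ after all modifications, which is precisely where the strength of the semi-stable MMP machinery is required.
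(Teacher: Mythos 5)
Your high-level plan coincides with the paper's: pass to a dlt model, pick a minimal horizontal log canonical center, use adjunction and Stein factorization to obtain a klt-trivial fibration, apply Ambro's theorem to it, and transfer b-nefness and abundance to $\mathbf M$ by comparing moduli b-divisors after a generically finite base change realizing semi-stable reduction in codimension one. That reduction is correct and is exactly what the paper does.

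There is, however, a genuine gap at the step you yourself single out as ``the central technical point.'' Since both canonical bundle formulae pull back the same $\mathbb Q$-divisor on the base, one has $K_{Y_3}+\mathbf M_{Y_3}+\mathbf B_{Y_3}\sim_{\mathbb Q}K_{Y_3}+\mathbf M^{\min}_{Y_3}+\mathbf B^{\min}_{Y_3}$ essentially for free; what must actually be shown is the equality of discriminants $\mathbf B_{Y_3}=\mathbf B^{\min}_{Y_3}$. Your proposal asserts that this will follow by ``matching the discriminant contributions term by term'' and that semi-stability ``ensures that no spurious vertical correction terms appear,'' but no argument is given, and the point is genuinely nontrivial: the log canonical thresholds over codimension-one points of $Y_3$ are computed on the ambient $X_3$ for the lc-trivial fibration and on the lower-dimensional $W_3$ for the klt-trivial one, and it is not clear a priori these agree. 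The paper's proof of the Claim cuts $Y_3$ down to a curve, then runs a carefully designed MMP with ample scaling on a log resolution $(V,B_V)$ of $(X_3,B_{X_3})$ with the modified boundary $(B_V+\sum_P b_P\pi^*P)^{\geq 0}+\varepsilon\sum_i E_i$, first over $X_3$ and then over $Y_3$, to contract the negative part and the chosen exceptional divisors and obtain a dlt model $V'$ whose horizontal minimal lc centers are lc but not klt over every $P\in\Sigma$. The key identification $b_P=b_P^{\min}$ then comes from the B-birational comparison of minimal lc centers furnished by Claims $(A_n)$ and $(B_n)$ in the proof of Lemma 4.9 of \cite{fujino-abun}, which produces a B-birational map between $(W_3,B_{W_3}+\sum_P b_P g_3^*P)$ and a minimal lc center $S_0$ of $(V',B_{V'}+\sum_P b_P\pi'^*P)$. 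Your proposal names the semi-stable MMP but does not indicate that it is applied to a log resolution with a modified boundary specifically engineered to produce this B-birational equivalence, and it omits the abundance-theorem input entirely; without that, the claimed matching of discriminants is a hope rather than a proof.
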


Let us recall the definition of {\em{b-nef and abundant}} $\mathbb Q$-b-divisors. 

\begin{defn}[{\cite[Definition 3.2]{ambro2}}]
A $\mathbb Q$-b-divisor $\mathbf M$ of a normal complete algebraic variety 
$Y$ is called {\em{b-nef and abundant}} 
if there exists a proper birational morphism $Y'\to Y$ from a normal variety $Y'$, 
endowed with a proper surjective morphism 
$h:Y'\to Z$ onto a normal variety $Z$ with connected fibers, such that: 
\begin{itemize}
\item[(1)] $\mathbf M_{Y'}\sim _\mathbb Q h^*H$, for some nef and big $\mathbb Q$-divisor $H$ of $Z$; 
\item[(2)] $\mathbf M=\overline {\mathbf M_{Y'}}$. 
\end{itemize}
\end{defn}

Let us quickly explain the idea of the proof of Theorem \ref{main}. 
We assume that the pair $(X, B)$ in Theorem \ref{main} is dlt for simplicity. 
Let $W$ be a log canonical center of $(X, B)$ which is dominant onto $Y$ and is minimal over the generic point of $Y$. 
We set $K_W+B_W=(K_X+B)|_W$ by adjunction. 
Then we have $K_W+B_W\sim _{\mathbb Q, Y}0$. 
Let $h:W\to Y'$ be the Stein factorization of $f|_W:W\to Y$. 
Note that $(W, B_W)$ is klt over the generic point of $Y'$. 
We prove that the moduli part $\mathbf M$ of $f:(X, B)\to Y$ coincides with 
the moduli part $\mathbf M^{\min}$ of $h:(W, B_W)\to Y'$ after taking a 
suitable generically finite base change by using the semi-stable minimal model program. 
We do not need the {\em{mixed}} period map nor 
the infinitesimal {\em{mixed}} Torelli theorem (see \cite[Section 2]{ambro2} and \cite{ssu}) for the proof of 
Theorem \ref{main}. 
We just reduce the problem on lc-trivial fibrations to Ambro's result on klt-trivial 
fibrations, which follows from the theory of period maps. 
Our proof of Theorem \ref{main} partially answers the questions in \cite[8.3.8 (Open problems)]{kollar}. 

It is conjectured that $\mathbf M$ is b-semi-ample (see, 
for example, \cite[0.~Introduction]{ambro1}, \cite[Conjecture 7.13.3]{ps}, 
\cite{floris}, \cite{birkar-chen}, and \cite[Section 3]{fujino10}). 
The b-semi-ampleness of the moduli part has been proved only for some special cases 
(see, for example, \cite{kawamata}, \cite{fujino1}, and \cite[Section 8]{ps}).  
See also Remark \ref{41} below. 

\begin{ack}
The first author was partially supported by the 
Grant-in-Aid for Young Scientists (A) $\sharp$24684002 
from JSPS.  
The second author was partially supported by the 
Grant-in-Aid for Research 
Activity Start-up $\sharp$24840009 from JSPS. The 
authors would like to thank Enrica Floris 
for giving them some comments 
and checking the French title and abstract. 
They also would like to thank the referee for some comments, suggestions, and 
drawing a big diagram in the proof of Theorem \ref{main}. 
\end{ack} 

We will work over $\mathbb C$, the complex number field, throughout this paper. 
We will make use of the standard notation as in \cite{fujino-funda}. 

\section{Preliminaries} 

Throughout this paper, we do not use $\mathbb R$-divisors. We only use $\mathbb Q$-divisors. 

\begin{say}[Pairs]
A pair $(X, B)$ consists of a normal variety $X$ over 
$\mathbb C$ and a $\mathbb Q$-divisor 
$B$ on $X$ such that $K_X+B$ is $\mathbb Q$-Cartier. 
A pair $(X, B)$ is called {\em{subklt}} (resp.~{\em{sublc}}) if for any projective birational morphism 
$g:Z\to X$ from a normal variety $Z$, every coefficient of $B_Z$ is $<1$ (resp.~$\leq 1$) where 
$K_Z+B_Z:=g^*(K_X+B)$. A pair $(X, B)$ is called {\em{klt}} 
(resp.~{\em{lc}}) if $(X, B)$ is subklt (resp.~sublc) and $B$ is effective. Let $(X, B)$ be an lc pair. 
If there is a 
log resolution $g:Z\to X$ of $(X, B)$ such that 
$\mathrm{Exc}(g)$ is a divisor and that 
the coefficients of the $g$-exceptional part of $B_Z$ 
are $<1$, then 
the pair $(X, B)$ is called 
{\em{divisorial log terminal}} ({\em{dlt}}, for short). 
Let $(X, B)$ be a sublc pair and let $W$ be a closed subset of $X$. Then $W$ is called a {\em{log canonical 
center}} of $(X, B)$ if 
there are a projective birational morphism $g:Z\to X$ from a normal 
variety $Z$ and a prime divisor $E$ on $Z$ such that $\mult _EB_Z=1$ and 
that $g(E)=W$. Moreover we say that $W$ is {\em{minimal}} if it is minimal with 
respect to inclusion. 
\end{say}

In this paper, we use the notion of {\em{b-divisors}} introduced by Shokurov. 
For details, we refer to \cite[2.3.2]{corti} and \cite[Section 3]{fujino4}. 

\begin{say}
[Canonical b-divisors] Let $X$ be a normal variety and let $\omega$ be a top rational differential 
form of $X$. 
Then $(\omega)$ defines a b-divisor $\mathbf K$. 
We call $\mathbf K$ the {\em{canonical b-divisor}} of $X$. 
\end{say}

\begin{say}[$\mathbf A(X, B)$ and $\mathbf A^*(X, B)$] 
The {\em{discrepancy b-divisor}} $\mathbf A=\mathbf A(X, B)$ of a pair $(X, B)$ is the $\mathbb Q$-b-divisor 
of $X$ with the trace $\mathbf A_Y$ defined by the 
formula 
$$
K_Y=f^*(K_X+B)+\mathbf A_Y, 
$$ 
where $f:Y\to X$ is a proper birational morphism of normal varieties. 
Similarly, we define $\mathbf A^*=\mathbf A^*(X, B)$ by 
$$
\mathbf A_Y^*=\sum _{a_i>-1}a_i A_i
$$ 
for 
$$
K_Y=f^*(K_X+B)+\sum a_i A_i, 
$$ 
where $f:Y\to X$ is a proper birational morphism of normal varieties. 
Note that $\mathbf A(X, B)=\mathbf A^*(X, B)$ when $(X, B)$ is subklt. 

By the definition, we have 
$\mathcal O_X(\lceil \mathbf A^*(X, B)\rceil)\simeq \mathcal O_X$ 
if $(X, B)$ is lc (see \cite[Lemma 3.19]{fujino4}). We also 
have $\mathcal O_X(\lceil \mathbf A(X, B)\rceil)
\simeq \mathcal O_X$ when $(X, B)$ is klt. 
\end{say}

\begin{say}[b-nef and b-semi-ample $\mathbb Q$-b-divisors] 
Let $X$ be a normal variety and let $X\to S$ be a proper surjective morphism onto a variety $S$. 
A $\mathbb Q$-b-divisor $\mathbf D$ of $X$ is {\em{b-nef over $S$}} 
(resp.~{\em{b-semi-ample over $S$}}) if there exists a proper birational morphism $X'\to X$ from a normal 
variety $X'$ such that $\mathbf D=\overline {\mathbf D_{X'}}$ and $\mathbf D_{X'}$ is nef 
(resp.~semi-ample) relative to the induced morphism $X'\to S$. 
\end{say} 

\begin{say}
Let $D=\sum _i d_iD_i$ be a $\mathbb Q$-divisor on a normal variety, where 
$D_i$ is a prime divisor for every $i$, $D_i\ne D_j$ for $i\ne j$, and 
$d_i\in \mathbb Q$ for every $i$. Then we 
set 
$$
D^{\geq 0}=\sum _{d_i\geq 0}d_iD_i \quad \text{and} \quad D^{\leq 0}=\sum _{d_i\leq 0}d_iD_i. 
$$
\end{say}

\section{A quick review of lc-trivial fibrations}

In this section, we quickly recall some basic definitions and results on 
{\em{klt-trivial fibrations}} and {\em{lc-trivial fibrations}} (see also \cite[Section 3]{fujino10}). 

\begin{defn}[Klt-trivial fibrations]\label{def-klt}
A {\em{klt-trivial fibration}} $f:(X, B)\to Y$ consists of a proper 
surjective morphism $f:X\to Y$ between normal varieties with connected fibers and 
a pair $(X, B)$ satisfying the following properties: 
\begin{itemize}
\item[(1)] $(X, B)$ is subklt over the generic point of $Y$; 
\item[(2)] $\rank f_*\mathcal O_X(\lceil \mathbf A(X, B)\rceil)=1$; 
\item[(3)] There exists a $\mathbb Q$-Cartier $\mathbb Q$-divisor 
$D$ on $Y$ such that 
$$
K_X+B\sim _{\mathbb Q}f^*D. 
$$
\end{itemize} 
\end{defn}

Note that Definition \ref{def-klt} is nothing but \cite[Definition 2.1]{ambro1}, 
where a klt-trivial fibration is called an lc-trivial fibration. So, our 
definition of lc-trivial fibrations in Definition \ref{def-lc} is different from the 
original one in \cite[Definition 2.1]{ambro1}. 

\begin{defn}[Lc-trivial fibrations]\label{def-lc}
An {\em{lc-trivial fibration}} $f:(X, B)\to Y$ consists of a proper 
surjective morphism $f:X\to Y$ between normal varieties with connected fibers and 
a pair $(X, B)$ satisfying the following properties: 
\begin{itemize}
\item[(1)] $(X, B)$ is sublc over the generic point of $Y$; 
\item[(2)] $\rank f_*\mathcal O_X(\lceil \mathbf A^*(X, B)\rceil)=1$; 
\item[(3)] There exists a $\mathbb Q$-Cartier $\mathbb Q$-divisor 
$D$ on $Y$ such that 
$$
K_X+B\sim _{\mathbb Q}f^*D. 
$$
\end{itemize} 
\end{defn}

In Section \ref{sec4}, we sometimes take various base changes and 
construct the induced lc-trivial fibrations and klt-trivial fibrations. For the 
details, see \cite[Section 2]{ambro1}. 

\begin{say}[Induced lc-trivial fibrations by base changes]\label{33}
Let $f:(X, B)\to Y$ be a klt-trivial (resp.~an lc-tirivial) fibration and let $\sigma:Y'\to Y$ be a generically finite 
morphism. Then we have an induced klt-trivial (resp.~lc-trivial) fibration $f':(X', B_{X'})\to Y'$, where 
$B_{X'}$ is defined by $\mu^*(K_X+B)=K_{X'}+B_{X'}$: 
$$
\xymatrix{
   (X', B_{X'}) \ar[r]^{\mu} \ar[d]_{f'} & (X, B)\ar[d]^{f} \\
   Y' \ar[r]_{\sigma} & Y,
} 
$$
Note that $X'$ is the normalization of 
the main component of $X\times _{Y}Y'$. 
We sometimes replace $X'$ with $X''$ where $X''$ is a normal variety 
such that there is a proper birational morphism 
$\varphi:X''\to X'$. 
In this case, we set 
$K_{X''}+B_{X''}=\varphi^*(K_{X'}+B_{X'})$.  
\end{say}

Let us explain the definitions of the {\em{discriminant}} and {\em{moduli}} $\mathbb Q$-b-divisors. 

\begin{say}
[Discriminant $\mathbb Q$-b-divisors and moduli $\mathbb Q$-b-divisors] 
Let $f:(X, B)\to Y$ be an lc-trivial fibration as in Definition \ref{def-lc}. 
Let $P$ be a prime divisor on $Y$. 
By shrinking $Y$ around the generic point of $P$, 
we assume that $P$ is Cartier. We set 
$$
b_P=\max \left\{t \in \mathbb Q\, \left|\, 
\begin{array}{l}  {\text{$(X, B+tf^*P)$ is sublc over}}\\
{\text{the generic point of $P$}} 
\end{array}\right. \right\} 
$$ 
and 
set $$
B_Y=\sum _P (1-b_P)P, 
$$ 
where $P$ runs over prime divisors on $Y$. Then it is easy to  see that 
$B_Y$ is a well-defined $\mathbb Q$-divisor on $Y$ and is called the {\em{discriminant 
$\mathbb Q$-divisor}} of $f:(X, B)\to Y$. We set 
$$
M_Y=D-K_Y-B_Y
$$ 
and call $M_Y$ the {\em{moduli $\mathbb Q$-divisor}} of $f:(X, B)\to Y$. 
Let $\sigma:Y'\to Y$ be a proper birational morphism 
from a normal variety $Y'$ and let $f':(X', B_{X'})\to Y'$ be the induced lc-trivial fibration 
by $\sigma:Y'\to Y$ (see \ref{33}). We can define $B_{Y'}$, $K_{Y'}$ and $M_{Y'}$ such that 
$\sigma^*D=K_{Y'}+B_{Y'}+M_{Y'}$, 
$\sigma_*B_{Y'}=B_Y$, $\sigma _*K_{Y'}=K_Y$ and $\sigma_*M_{Y'}=M_Y$. Hence 
there exist a unique $\mathbb Q$-b-divisor $\mathbf B$ such that 
$\mathbf B_{Y'}=B_{Y'}$ for every $\sigma:Y'\to Y$ and a unique 
$\mathbb Q$-b-divisor $\mathbf M$ such that $\mathbf M_{Y'}=M_{Y'}$ for 
every $\sigma:Y'\to Y$. 
Note that $\mathbf B$ is called the {\em{discriminant $\mathbb Q$-b-divisor}} and 
that $\mathbf M$ is called the {\em{moduli $\mathbb Q$-b-divisor}} associated to $f:(X, B)\to Y$. 
We sometimes simply say that $\mathbf M$ is the {\em{moduli part}} of $f:(X, B)\to Y$. 
\end{say}

For the basic properties of 
the discriminant and moduli $\mathbb Q$-b-divisors, see \cite[Section 2]{ambro1}. 

Let us recall the main theorem of \cite{ambro1}. Note that 
a klt-trivial fibration in the sense of Definition \ref{def-klt} is called 
an lc-trivial fibration in \cite{ambro1}. 

\begin{thm}[{see \cite[Theorem 2.7]{ambro1}}]\label{thm-klt-tri}
Let $f:(X, B)\to Y$ be a klt-trivial fibration and let $\pi:Y\to S$ be a proper morphism. 
Let $\mathbf B$ and $\mathbf M$ be the induced discriminant and moduli $\mathbb Q$-b-divisors of $f$. 
Then, 
\begin{itemize}
\item[(1)] $\mathbf K+\mathbf B$ is $\mathbb Q$-b-Cartier, 
that is, there exists a proper birational morphism $Y'\to Y$ from a normal 
variety $Y'$ such that $\mathbf {K}+\mathbf {B}=\overline{K_{Y'}+\mathbf{B}_{Y'}}$, 
\item[(2)] $\mathbf M$ is b-nef over $S$.  
\end{itemize}
\end{thm}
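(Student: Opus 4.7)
The plan is to first move to a ``prepared'' model by a suitable generically finite base change and then extract the nefness of $\mathbf M$ from the Hodge-theoretic origin of the moduli part. I would begin by choosing, using Kawamata's covering trick together with semistable reduction in codimension one, a generically finite cover $\tau : \bar Y \to Y$ with $\bar Y$ smooth projective such that the induced klt-trivial fibration $\bar f : (\bar X, B_{\bar X}) \to \bar Y$ (formed as in \ref{33}) can be represented on a log resolution with the following properties: $\Supp(B_{\bar X}^{\geq 0})$ is contained in a simple normal crossing divisor on $\bar X$; the discriminant $\mathbf B_{\bar Y}$ is supported on a simple normal crossing divisor $\Sigma \subset \bar Y$; and the local monodromies, along the components of $\Sigma$, of the polarized variation of Hodge structure attached to $\bar f$ are unipotent. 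This preparation is standard and uses nothing beyond the subklt condition along the generic fiber.

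Next, I would identify some multiple $m M_{\bar Y}$ with (the first Chern class of) the Deligne canonical extension of the bottom Hodge bundle of a $\mathbb Q$-variation of Hodge structure coming from $\bar f$. Concretely, choose $m$ with $m(K_{\bar X} + B_{\bar X}) \sim m \bar f^{*}\bar D$ linearly, and take the cyclic cover $\pi : Z \to \bar X$ of degree $m$ branched along an appropriate divisor in $|m(K_{\bar X}+B_{\bar X}-\bar f^{*}\bar D)|$. After a log resolution of $Z$ and restriction to an open $V \subset \bar Y$ over which $g : Z \to \bar Y$ is smooth, the bottom Hodge bundle of $R^{d} g_{*}\mathbb C_{V}$ (with $d$ the relative dimension) computes $m M_{\bar Y}|_{V}$ via the canonical bundle formula; the subklt condition, together with assumption $(2)$ of Definition \ref{def-klt}, is exactly what rules out unwanted boundary contributions to this identification. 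Applying Kawamata's semipositivity theorem for the Deligne canonical extension, with unipotent monodromy guaranteed by the preparation above, shows that $M_{\bar Y}$ is nef on $\bar Y$.

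To promote nefness of $M_{\bar Y}$ to b-nefness of $\mathbf M$ in $(2)$, one checks that in the prepared setup every higher birational model $Y'' \to \bar Y$ satisfies $M_{Y''} = (Y'' \to \bar Y)^{*} M_{\bar Y}$: this is because the log canonical thresholds of $\bar f^{*}P$ have already stabilized on $\bar X$ for every prime divisor $P$ on $\bar Y$, so further blow-ups of $\bar Y$ merely pull back $B_{\bar Y}$ and $M_{\bar Y}$. The same stabilization yields $\mathbf K + \mathbf B = \overline{K_{\bar Y}+\mathbf B_{\bar Y}}$ on models above $\bar Y$, giving $(1)$. One then descends from $\bar Y$ to $Y$ using the compatibility of the discriminant and moduli $\mathbb Q$-b-divisors under generically finite base change established in \cite[Section 2]{ambro1}: b-Cartierness of $\mathbf K + \mathbf B$ descends to a suitable birational model of $Y$ via pushforward, and nefness is insensitive to generically finite covers.

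The main obstacle is the Hodge-theoretic identification of $M_{\bar Y}$ as (a multiple of) the bottom piece of the Hodge filtration of a polarized $\mathbb Q$-VHS; this is the genuine technical content and the place where the klt (as opposed to merely lc) hypothesis on the generic fiber is essential, since the cyclic cover construction and the subsequent control of $\pi_{*}\mathcal O_{Z}$ depend on $\lceil \mathbf A(X,B) \rceil$ having trivial pushforward of rank one. Once that translation is in hand, $(2)$ is a direct consequence of Kawamata's semipositivity theorem, while $(1)$ reduces to the routine stabilization statement for lc thresholds in the prepared setting.
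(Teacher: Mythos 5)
This theorem is not proved in the paper at all: it is quoted verbatim from Ambro (\cite[Theorem 2.7]{ambro1}), and the only hints the paper gives about its proof are the remark that it rests on the theory of variations of Hodge structure and the decomposition of $\widetilde f_*\omega_{\widetilde X/Y}$ into eigensheaves of a cyclic cover recalled in Remark \ref{rem39}. Your sketch correctly reproduces the architecture of Ambro's argument: semistable reduction in codimension one and unipotent monodromy after a generically finite cover, the cyclic cover attached to the relation $m(K_{\bar X}+B_{\bar X})\sim m\bar f^*\bar D$ (note that the branch divisor lives in $\Supp B_{\bar X}$ plus vertical components, not in the linear system $|m(K_{\bar X}+B_{\bar X}-\bar f^*\bar D)|$, which is trivial), the identification of $\mathcal O(bM_{\bar Y})$ with the canonical extension of the bottom Hodge piece of the relevant eigen-sub-VHS (this is where Definition \ref{def-klt}(2) enters, as you say), and the Fujita--Kawamata semipositivity theorem.

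The genuine gap is in your treatment of the descent step. You assert that on the prepared model ``the log canonical thresholds of $\bar f^*P$ have already stabilized on $\bar X$ for every prime divisor $P$ on $\bar Y$, so further blow-ups of $\bar Y$ merely pull back $B_{\bar Y}$ and $M_{\bar Y}$,'' and you call (1) a ``routine stabilization statement for lc thresholds.'' But stabilization over the divisors $P$ of $\bar Y$ says nothing about a divisor $Q$ extracted by $Y''\to\bar Y$ over a center of codimension at least two, where the coefficient of $B_{Y''}$ is a new lc-threshold computation for the crepant-pulled-back pair $(X'',B_{X''})$; semistability in codimension one gives no control there. Establishing $K_{Y''}+B_{Y''}=\mu^*(K_{\bar Y}+B_{\bar Y})$, equivalently $M_{Y''}=\mu^*M_{\bar Y}$, is precisely the content of \cite[Propositions 5.4 and 5.5]{ambro1} (the ``base change property''), and one of the two required inequalities is obtained not from lc thresholds but from the compatibility of the Deligne canonical extension with pullback under unipotent monodromy --- the analogue of part (3) of Theorem \ref{thm-mhs}, which is exactly why that statement is included in the paper. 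Without invoking this base-change isomorphism your argument for (1), and hence for the ``b-'' in b-nef in (2), does not close.
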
 

Theorem \ref{thm-klt-tri} has some important applications, see, for example, 
\cite[Proof of Theorem 1.1]{fujino-kawa} and 
\cite[The proof of Theorem 1.1]{fujino4}. 

By modifying the arguments in \cite[Section 5]{ambro1} suitably with the 
aid of \cite[Theorems 3.1, 3.4, and 3.9]{fujino2} (see also \cite{fujino-fujisawa}), 
we can generalize Theorem \ref{thm-klt-tri} 
as follows. 

\begin{thm}\label{thm-lc-tri} 
Let $f:(X, B)\to Y$ be an lc-trivial fibration and let $\pi:Y\to S$ be a proper morphism. 
Let $\mathbf B$ and $\mathbf M$ be the induced discriminant and moduli $\mathbb Q$-b-divisors of $f$. 
Then, 
\begin{itemize}
\item[(1)] $\mathbf K+\mathbf B$ is $\mathbb Q$-b-Cartier, 
\item[(2)] $\mathbf M$ is b-nef over $S$.  
\end{itemize}
\end{thm}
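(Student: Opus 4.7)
The plan is to rerun Ambro's proof of Theorem \ref{thm-klt-tri} from \cite[Section 5]{ambro1} almost verbatim, replacing each appeal to the Hodge-theoretic semipositivity of the bottom piece of a polarized variation of Hodge structure by its mixed Hodge theoretic counterpart supplied by \cite[Theorems 3.1, 3.4, and 3.9]{fujino2} and \cite{fujino-fujisawa}. The only essential use of the klt hypothesis in Ambro's argument is to produce a pure variation of Hodge structure on a suitable open set of $Y$; once one has the analogous mixed statement, the remaining combinatorial and birational bookkeeping for $\mathbf B$ and $\mathbf M$ is formal.

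First I would perform the standard preparation. Both assertions are invariant under replacing $Y$ by a higher birational model and under the generically finite base changes described in \ref{33}. Using a Kawamata-type cover followed by a log resolution and, over a sufficiently high model of $Y$, semistable reduction in codimension one, I may assume that $X$ and $Y$ are smooth, that there is a simple normal crossing divisor $\Sigma\subset Y$ outside of which $f$ is smooth, that $\Supp(B)\cup f^{-1}(\Sigma)$ is simple normal crossing on $X$, and that $B$ has standard coefficients, with its coefficient-one components (the new feature compared to the klt case) meeting the other strata transversally. After this reduction, assertion (1) follows from the analysis of how $b_P$ transforms under further blow-ups of $Y$: one checks that the discriminant stabilizes on a high enough model via a sublc adjunction calculation, which is the routine lc version of the corresponding computation in \cite[Section 5]{ambro1}.

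For (2), the heart of the matter is to identify $M_{Y'}$, for a sufficiently high model $Y'$, with a Hodge-theoretic Chern class. Concretely, following \cite[Section 5]{ambro1}, one forms a cyclic cover of $X$ built from the fractional part of $B$, whose middle relative cohomology carries an admissible variation of mixed Hodge structure on $Y'\setminus\Sigma$, and $M_{Y'}$ is $\mathbb Q$-linearly equivalent to the first Chern class of the canonical extension across $\Sigma$ of the bottom Hodge-filtered piece of this VMHS. In the klt case the weight filtration is trivial and Ambro invokes Fujita--Kawamata semipositivity for the canonical extension of a pure VHS; in the lc case, the semipositivity and base-change compatibility required for the canonical extension of an admissible VMHS are exactly \cite[Theorems 3.1 and 3.4]{fujino2}, and the weak positivity needed to propagate nefness through the base-change diagrams of \ref{33} is provided by \cite[Theorem 3.9]{fujino2} (see also \cite{fujino-fujisawa}). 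This yields a model $Y'\to Y$ with $M_{Y'}$ nef relative to the induced morphism $Y'\to S$, hence $\mathbf M$ is b-nef over $S$.

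The main obstacle, and the reason the lc generalization is nontrivial, is the passage from pure to mixed Hodge structures: one must verify that the direct image sheaf encoding the moduli part really is the lowest Hodge-filtered piece of an admissible VMHS whose canonical extension satisfies the required semipositivity, and that the coefficient-one components of $B$, which distinguish the lc case from the klt case, do not destroy this structure. This is precisely the content of the cited results of Fujino and Fujino--Fujisawa; once these are installed as a black box, Ambro's diagrammatic arguments go through essentially unchanged, yielding both the $\mathbb Q$-b-Cartierness of $\mathbf K+\mathbf B$ and the b-nefness of $\mathbf M$ over $S$.
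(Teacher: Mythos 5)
Your proposal matches the paper's approach: the paper obtains Theorem~\ref{thm-lc-tri} by rerunning Ambro's argument from \cite[Section 5]{ambro1} with the pure-Hodge-theoretic semipositivity and base-change inputs replaced by the mixed-Hodge-theoretic package of \cite[Theorems 3.1, 3.4, and 3.9]{fujino2} and \cite{fujino-fujisawa}, which the paper encapsulates as Theorem~\ref{thm-mhs} together with the unipotence observation in the remark following it. The one step you describe only in outline --- how the horizontal coefficient-one boundary components enter --- is made explicit in Remark~\ref{rem39}: in \cite[Lemma 5.2 (4)]{ambro1} one replaces $\widetilde f_*\omega_{\widetilde X/Y}$ by $\widetilde f_*\omega_{\widetilde X/Y}(\pi^*D)$, where $D$ is the sum of the horizontal coefficient-one components of $B$.
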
 

Theorem \ref{thm-klt-tri} is proved by using the theory of variations of 
Hodge structure. On the 
other hand, Theorem \ref{thm-lc-tri} follows from the theory of 
variations of {\em{mixed}} Hodge structure. 
We do not adopt the formulation in \cite[Section 4]{fujino-pre} (see also \cite[8.5]{kollar}) 
because the argument in \cite{ambro1} suits 
our purposes better. 
For the reader's convenience, we include the main ingredient of the proof of Theorem \ref{thm-lc-tri}, which 
easily follows from \cite[Theorems 3.1, 3.4, and 3.9]{fujino2} (see also \cite{fujino-fujisawa}). 

\begin{thm}[{cf.~\cite[Theorem 4.4]{ambro1}}]\label{thm-mhs} 
Let $f:X\to Y$ be a projective morphism between algebraic varieties. 
Let $\Sigma_X$ {\em{(}}resp.~$\Sigma_Y${\em{)}} be 
a simple normal crossing divisor on $X$ {\em{(}}resp.~$Y${\em{)}} such that 
$f$ is smooth over $Y\setminus \Sigma_Y$, $\Sigma_X$ is relatively normal crossing 
over $Y\setminus \Sigma_Y$, and $f^{-1}(\Sigma_Y)\subset \Sigma _X$. 
Assume that $f$ is semi-stable in codimension one. 
Let $D$ be a simple normal crossing divisor on $X$ such that 
$\Supp D\subset \Sigma_X$ and 
that every irreducible component of $D$ is dominant onto $Y$. Then the following properties hold. 
\begin{itemize}
\item[(1)] $R^pf_*\omega_{X/Y}(D)$ is a locally free sheaf on $Y$ for every $p$. 
\item[(2)] $R^pf_*\omega_{X/Y}(D)$ is semi-positive for every $p$. 
\item[(3)] Let $\rho :Y'\to Y$ be a projective morphism from a smooth variety $Y'$ such that 
$\Sigma_{Y'}=\rho^{-1}(\Sigma_Y)$ is a simple normal crossing 
divisor on $Y'$. 
Let $\pi:X'\to X\times _YY'$ be a resolution of the main component of $X\times _YY'$ such that 
$\pi$ is an isomorphism over $Y'\setminus \Sigma_{Y'}$. Then 
we obtain the following commutative diagram{\em{:}} 
$$
\xymatrix{
   X' \ar[r] \ar[d]_{f'} & X\ar[d]^{f} \\
   Y' \ar[r]_{\rho} & Y. 
}  
$$ 
Assume that $f'$ is projective, $D'$ is a simple normal crossing divisor on $X'$ such that 
$D'$ coincides with 
$D\times _YY'$ over $Y'\setminus \Sigma_{Y'}$, and every stratum of $D'$ is dominant onto 
$Y'$. 
Then there exists a natural isomorphism 
$$
\rho^*(R^pf_*\omega_{X/Y}(D))\simeq R^pf'_*\omega_{X'/Y'}(D')
$$ 
which extends the base change isomorphism over $Y\setminus \Sigma_Y$ for 
every $p$. 
\end{itemize}
\end{thm}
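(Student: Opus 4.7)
The plan is to derive each of (1), (2), (3) directly from the corresponding theorem in \cite{fujino2} (equivalently, \cite{fujino-fujisawa}); the actual work is in matching our geometric hypotheses to the hypotheses in those references, and in extending the base change isomorphism past $\Sigma_{Y'}$ in part (3).

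For (1) and (2), I would interpret $R^p f_*\omega_{X/Y}(D)$ as a Hodge graded piece of a variation of mixed Hodge structure. Over $U := Y\setminus \Sigma_Y$, the morphism $f$ is smooth and $\Sigma_X\cap f^{-1}(U)$ is a relative simple normal crossing divisor, so the triple $(f^{-1}(U),\, \Sigma_X\cap f^{-1}(U),\, D\cap f^{-1}(U))$ gives an admissible variation of mixed Hodge structure on $U$. The assumption that $f$ is semi-stable in codimension one is precisely what guarantees that the local monodromies around the codimension-one components of $\Sigma_Y$ are unipotent, so Deligne's canonical extension behaves well and its Hodge filtration pieces recover $R^p f_*\omega_{X/Y}(D)$. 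Given this identification, (1) is the content of \cite[Theorem 3.1]{fujino2}, and (2) is \cite[Theorem 3.4]{fujino2}, the latter being the Fujita--Kawamata--Viehweg-type semi-positivity statement for variations of mixed Hodge structure.

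For (3), the natural approach is to invoke \cite[Theorem 3.9]{fujino2} applied to the given diagram. Over $Y'\setminus \Sigma_{Y'}$, the morphism $\pi$ is an isomorphism and $f'$ is smooth, so smooth base change supplies the desired isomorphism $\rho^*(R^p f_*\omega_{X/Y}(D)) \simeq R^p f'_*\omega_{X'/Y'}(D')$ there. The content of (3) is that this local isomorphism extends to a global one. The main obstacle will be verifying that $f'$ inherits the codimension-one semi-stability needed to apply \cite[Theorem 3.1]{fujino2} to $f'$ on $Y'$: because $\rho$ can ramify along $\Sigma_Y$ and $\Sigma_{Y'}$ may contain components lying over codimension-two loci of $Y$, one must check that the pulled-back variation remains unipotent in codimension one on $Y'$. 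This follows because pulling back a unipotent monodromy operator along a branched cover produces a power, which is again unipotent, and the hypotheses that every stratum of $D'$ is dominant onto $Y'$ and that $D'$ coincides with $D\times_Y Y'$ over $Y'\setminus\Sigma_{Y'}$ ensure that the generic fiber mixed Hodge data match after pullback. Once this codimension-one semi-stability is in place, both sides of the desired isomorphism are locally free canonical extensions of the same local system with compatible Hodge filtrations, and the smooth base change isomorphism over $Y'\setminus \Sigma_{Y'}$ extends uniquely to the asserted global isomorphism, which is exactly the statement of \cite[Theorem 3.9]{fujino2}.
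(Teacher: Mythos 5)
Your proposal matches the paper's own treatment: the paper simply observes that Theorem~\ref{thm-mhs} ``easily follows from \cite[Theorems 3.1, 3.4, and 3.9]{fujino2} (see also \cite{fujino-fujisawa})'' and offers no further proof, and you invoke exactly those three results for parts (1), (2), (3) respectively, with the same mechanism (unipotent local monodromy from codimension-one semi-stability, canonical extension, base change over the open locus). Your additional commentary on transporting unipotency through the branched cover in (3) fills in detail the paper leaves implicit but does not change the route.
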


\begin{rem} 
For the proof of Theorem \ref{thm-lc-tri}, 
Theorem \ref{thm-mhs} for $p=0$ is sufficient. 
Note that all the local monodromies on 
$R^q(f_{0})_*\mathbb C_{X_0\setminus D_0}$ around $\Sigma_Y$ are unipotent
for every $q$ because $f$ is semi-stable in codimension one, where 
$X_0=f^{-1}(Y\setminus \Sigma_Y)$, $D_0=D|_{X_0}$, and $f_0=f|_{X_0\setminus D_0}$. 
More precisely, let $C_0^{[d]}$ be 
the disjoint union of all the 
codimension $d$ log canonical centers of $(X_0, D_0)$. 
If $d=0$, then we put $C_0^{[0]}=X_0$. 
In this case, we have the following weight spectral sequence 
$$
_W\!E_1^{-d, q+d}=R^{q-d}(f|_{C_0^{[d]}})_*
\mathbb C_{C_0^{[d]}}\Longrightarrow R^q(f_0)_*\mathbb C_{X_0\setminus D_0}
$$ 
which degenerates at $E_2$ (see, for example, \cite[Corollaire (3.2.13)]{deligne}). 
Since $f$ is semi-stable in codimension one, all the local monodromies 
on $R^{q-d}(f|_{C_0^{[d]}})_*\mathbb C_{C_0^{[d]}}$ around $\Sigma_Y$ are 
unipotent for every $q$ and $d$ 
(see, for example, \cite[VII.~The Monodromy theorem]{katz}). 
By the above spectral sequence, we obtain that 
all the local monodromies on $R^q(f_0)_*\mathbb C_{X_0\setminus D_0}$ around 
$\Sigma_Y$ are unipotent. 
\end{rem}

We add a remark on the proof of Theorem \ref{thm-lc-tri}. 
In Remark \ref{rem39}, we explain how to modify the arguments in the proof of \cite[Lemma 5.2]{ambro1} 
in order to treat lc-trivial fibrations. 
It will help the reader to understand the main difference between klt-trivial fibrations and lc-trivial fibrations and the 
reason why we need Theorem \ref{thm-mhs}. 

\begin{rem}\label{rem39} 
We use the notation in \cite[Lemma 5.2]{ambro1}. We only assume that 
$(X, B)$ is sublc over the generic point of $Y$ in \cite[Lemma 5.2]{ambro1}. 
We write 
$$
B=\sum _{i\in I}d_iB_i
$$ 
where $B_i$ is a prime divisor for every $i$ and $B_i\ne B_j$ for $i\ne j$. 
We set 
$$
J=\left\{i\in I \, \left|\,  {\text{$B_i$ is dominant onto $Y$ 
and $d_i=1$}} \right. \right\} 
$$ 
and set 
$$
D=\sum _{i\in J}B_i. 
$$
In Ambro's original setting in \cite[Lemma 5.2]{ambro1}, we have 
$D=0$ because 
$(X, B)$ is subklt over the generic point of $Y$. 
In the proof of \cite[Lemma 5.2 (4)]{ambro1}, 
we have to replace 
$$
\widetilde f_*\omega_{\widetilde {X}/Y}=
\bigoplus _{i=0}^{b-1}f_*\mathcal O_X(\lceil (1-i)K_{X/Y}-iB+if^*B_Y+if^*M_Y\rceil)\cdot \psi^i. 
$$ 
with  
$$
\widetilde f_*\omega_{\widetilde {X}/Y}(\pi^*D)=
\bigoplus _{i=0}^{b-1}f_*\mathcal O_X(\lceil (1-i)K_{X/Y}-iB+D+if^*B_Y+if^*M_Y\rceil)\cdot \psi^i 
$$ 
in order to treat lc-trivial fibrations. 
We leave the details as exercises for the reader. 
\end{rem}

The following theorem is a part of \cite[Theorem 3.3]{ambro2}. 

\begin{thm}[{see \cite[Theorem 3.3]{ambro2}}]\label{thm-moduli} 
Let $f:(X, B)\to Y$ be a klt-trivial fibration such that 
$Y$ is complete,  
the geometric generic fiber $X_{\overline \eta}=X\times \Spec \overline {\mathbb C(\eta)}$ is a projective variety, 
and 
$B_{\overline \eta}=B|_{X_{\overline {\eta}}}$ is effective, where 
$\eta$ is the generic point of $Y$. 
Then the moduli $\mathbb Q$-b-divisor $\mathbf M$ is b-nef and abundant. 
\end{thm}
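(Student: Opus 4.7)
The plan is to realize the moduli $\mathbb Q$-b-divisor $\mathbf M$ as pulled back, on a sufficiently high birational model of $Y$, from a nef and big $\mathbb Q$-divisor on the Stein factorization of a period map associated to the variation of Hodge structure carried by the klt-trivial fibration.

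First I would replace $f: (X, B) \to Y$ by a suitable birational and generically finite model so that $Y$ is smooth projective, the discriminant locus $\Sigma_Y$ is a simple normal crossing divisor, $f$ is semi-stable in codimension one, and all local monodromies of the relevant local systems around $\Sigma_Y$ are unipotent. By functoriality of the discriminant and moduli $\mathbb Q$-b-divisors under generically finite base change (recalled in \ref{33}), together with Kawamata's semi-stable reduction, this reduction does not affect the statement of b-nef and abundant.

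Second, I would identify a positive multiple of $M_Y$ with the first Chern class of the Deligne canonical extension of a Hodge line bundle on $Y$. Concretely, using Theorem \ref{thm-mhs} for $p=0$, the semipositive sheaf $f_*\omega_{X/Y}(D)$, for a suitable $D$ built from $B$, carries the top Hodge piece of the variation of Hodge structure on $R^n (f_0)_*\mathbb C$, and its determinant computes $bM_Y$ modulo $\mathbb Q$-linear equivalence for some $b>0$. This is the same mechanism that proves b-nefness of $\mathbf M$ in Theorem \ref{thm-lc-tri}; here I need to upgrade to b-abundance. Next, I would invoke Borel's extension theorem for the associated period map $\Phi: Y\setminus \Sigma_Y \to \Gamma \backslash \mathcal D$. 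Unipotency of local monodromies allows $\Phi$ to extend, on some modification $Y' \to Y$, to a holomorphic map into a toroidal compactification of $\Gamma \backslash \mathcal D$. Taking the Stein factorization $h: Y' \to Z$ of the extended map yields a proper surjective morphism with connected fibers onto a normal projective variety $Z$, and by Griffiths's curvature formula the Hodge line bundle descends to a nef $\mathbb Q$-divisor $H$ on $Z$ with $\mathbf M_{Y'} \sim_{\mathbb Q} h^*H$.

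The main obstacle will be bigness of $H$ on $Z$. This is precisely where the klt hypothesis (rather than merely lc), together with the projectivity of the geometric generic fiber and effectivity of $B_{\overline \eta}$, becomes essential: they guarantee that the variation of Hodge structure is pure and polarized, so Griffiths's infinitesimal Torelli / horizontality argument, combined with the strict positivity of the curvature of the Hodge line bundle along horizontal tangent directions, forces the differential of $\Phi$ to have maximal rank generically on $Z$. This yields bigness of $H$ and hence conditions (1) and (2) of the definition of b-nef and abundant. It is exactly this infinitesimal Torelli input for the pure case that the authors' reduction of the lc-trivial case to this theorem allows them to avoid, sparing them the mixed period map and the infinitesimal mixed Torelli theorem referenced in the introduction.
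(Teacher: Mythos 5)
The paper does not prove Theorem~\ref{thm-moduli}: it is stated as a cited result (``The following theorem is a part of \cite[Theorem 3.3]{ambro2}'') and is used as a black box. Indeed, the entire strategy of the paper is to \emph{avoid} having to run a period-map argument. The authors explicitly point this out in the introduction: they reduce the lc-trivial case to Ambro's klt-trivial result precisely so that they ``do not need the mixed period map nor the infinitesimal mixed Torelli theorem.'' So there is nothing in the paper to compare your sketch against; you are reconstructing the external ingredient from \cite{ambro2}, not reproducing an argument in this paper.

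As a reconstruction of Ambro's proof, your outline is broadly in the right direction (semi-stable reduction and unipotent monodromy, the cyclic-cover identification of a multiple of $M_Y$ with a Hodge line bundle inside $f_*\omega_{\widetilde X/Y}$, extension of the period map to a toroidal compactification of $\Gamma\backslash\mathcal D$, and Stein factorization to obtain $h\colon Y'\to Z$). One imprecision: you invoke an ``infinitesimal Torelli'' argument to make the differential of $\Phi$ have maximal rank on $Z$. That is unnecessary as phrased --- by the very definition of the Stein factorization $Y'\to Z$ of the extended period map, the induced map from $Z$ onto its image is finite, so its differential is automatically generically of maximal rank. What you actually need is strict positivity of the Hodge metric curvature on horizontal directions away from the kernel of $d\Phi$ (Griffiths's curvature computation), which then gives that the descended $\mathbb Q$-divisor $H$ on $Z$ has positive top self-intersection and is therefore big; nefness comes from semipositivity of the Hodge metric plus normality of the toroidal compactification. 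Your final paragraph correctly identifies the role of the klt hypothesis (purity and polarization of the relevant VHS) and correctly notes that the present paper's contribution is to reduce the lc case to this pure statement.
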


\section{Proof of Theorem \ref{main}}\label{sec4} 

Let us give a proof of Theorem \ref{main}. 

\begin{proof}[Proof of Theorem \ref{main}] 
By taking a dlt blow-up, we may assume that 
the pair $(X, B)$ is $\mathbb Q$-factorial and dlt (see, for example, \cite[Section 4]{fujino3}). 
If $(X, B)$ is klt over the generic point of $Y$, then Theorem \ref{main} follows from \cite[Theorem 3.3]{ambro2} 
(see Theorem \ref{thm-moduli}). 
Therefore, we may also assume that $(X, B)$ is not klt over the generic point of $Y$. 
Let $\sigma_1:Y_1\to Y$ be a suitable 
projective birational morphism such that $\mathbf M=\overline {\mathbf M_{Y_1}}$ 
and $\mathbf M_{Y_1}$ is nef by Theorem \ref{thm-lc-tri}. 
Let $W$ be an arbitrary log canonical center of $(X, B)$ which is dominant onto $Y$ and 
is minimal over the 
generic point of $Y$. We set 
$$K_W+B_W=(K_X+B)|_W$$ 
by adjunction (see, for example, \cite[3.9]{fujino-what}). By the construction, we have $K_W+B_W
\sim _{\mathbb Q, Y}0$. 
We consider the Stein factorization of $f|_W:W\to Y$ and denote it 
by $h:W\to Y'$. Then $K_W+B_W\sim _{\mathbb Q, Y'}0$. 
We see that $h:(W, B_W)\to Y'$ is a klt-trivial fibration since the general fibers of $f|_{W}$ are 
klt pairs. 
Let $Y_2$ be a suitable resolution of $Y'$ which factors through $\sigma_1:Y_1\to Y$. 
By taking the base change by $\sigma_2:Y_2\to Y_1$, 
we obtain $\mathbf M_{Y_2}=\sigma_2^*\mathbf M_{Y_1}$ 
(see \cite[Proposition 5.5]{ambro1}). 
Note that the proof of \cite[Proposition 5.5]{ambro1} works for 
lc-trivial fibrations by some suitable modifications. 
By the construction, on the induced lc-trivial fibration $f_2:(X_2, B_{X_2})\to Y_2$, 
where $X_2$ is the normalization of the main component of 
$X\times _YY_2$, 
there is a log canonical 
center $W_2$ of $(X_2, B_{X_2})$ such that $f_2|_{W_2^\nu}: (W_2^\nu, B_{W_2^\nu})\to Y_2$ is a 
klt-trivial fibration, 
which is birationally equivalent to $h:(W, B_W)\to Y'$. 
Note that $\nu: W_2^\nu\to W_2$ 
is the normalization, $K_{W_2^\nu}+B_{W_2^\nu}=\nu^*(K_{X_2}+B_{X_2})|_{W_2}$, 
and $f_2|_{W_2^\nu}=f_2|_{W_2}\circ \nu$. 
It is easy to see that  
$$
K_{Y_2}+\mathbf M_{Y_2}+\mathbf B_{Y_2}\sim _{\mathbb Q}
K_{Y_2}+\mathbf M^{\min}_{Y_2}+\mathbf B^{\min}_{Y_2}
$$ 
where $\mathbf M^{\min}$ and $\mathbf B^{\min}$ are the induced moduli 
and discriminant $\mathbb Q$-b-divisors 
of $f_2|_{W^\nu_2}: (W^\nu_2, B_{W^\nu_2})\to Y_2$ such that 
$$
K_{W^\nu_2}+B_{W^\nu_2}\sim _{\mathbb Q}(f_2|_{W^\nu_2})
^*(K_{Y_2}+\mathbf M_{Y_2}^{\min}+\mathbf B_{Y_2}^{\min}). 
$$ 
By replacing $Y_2$ birationally, we may further assume that 
$\mathbf M^{\min}=\overline {\mathbf M_{Y_2}^{\min}}$ by Theorem \ref{thm-klt-tri}. 
By Theorem \ref{thm-moduli}, we see that $\mathbf M_{Y_2}^{\min}$ is nef and abundant. 
Let $\sigma_3:Y_3\to Y_2$ be a suitable 
generically finite morphism such that  
the induced lc-trivial fibration $f_3:(X_3, B_{X_3})\to Y_3$ has a semi-stable resolution in codimension one 
(see, for example, \cite{kkms}, \cite[(9.1) Theorem]{ssu}, and \cite[Theorem 4.3]{ambro1}). 
Note that $X_3$ is the normalization of the main component of $X\times _YY_3$. 
Here we draw the following big diagram for the reader's convenience. 
$$
\xymatrix{
(V, B_V) \ar[dr]^{\textrm{log-res.}} & & & & & &\\
& (X_3,B_3) \ar[rrr] \ar[dd]^{f_3}
& & & (X_2,B_2)\ar[rr]\ar[dd]^{f_2}
& & (X, B)\ar[dd]^{f}
\\(W_3,B_{W_3})
\ar[dr]_{g_3}
\ar[ur]
& & W_2^\nu
\ar[drr]_{{f_2}|_{W_2^\nu}}
\ar[r]_\nu^{\textrm {norm.}}
& W_2 
\ar@{^{(}->}[ur]
\ar[dr]^{{f_2}|_{W_2}}
& & W
\ar@{^{(}->}[ur] 
\ar@{->>}[d]^h\ar@{->>}[dr] 
&\\
&Y_3
\ar@/_1pc/[rrr]_{\textrm {semistab.}}
&&&
Y_2\ar[r]^{\textrm {desing.}}\ar[dr]_{\sigma_2}
&
Y'\ar[r]^{\textrm {Stein}}
& 
Y
\\
 & & & & & Y_1\ar[ur]_{\sigma_1} &
}
$$
Note that $g_3:(W_3, B_{W_3})\to 
Y_3$ is the induced klt-trivial 
fibration from $f_2|_{W_2^\nu}: W_2^\nu\to Y_2$ by $\sigma_3:Y_3\to 
Y_2$. 
On $Y_3$, we will see the following claim by using the semi-stable minimal model 
program.
\begin{claim} The following equality 
$$
\mathbf B_{Y_3}=\mathbf B_{Y_3}^{\min}
$$ 
holds. 
\end{claim}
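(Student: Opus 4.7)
The plan is to verify the equality coefficient by coefficient: for each prime divisor $P\subset Y_3$, I compare the lc thresholds $b_P$ and $b_P^{\min}$ defining the coefficients of $P$ in $\mathbf{B}_{Y_3}$ and $\mathbf{B}^{\min}_{Y_3}$ respectively. Both directions of $b_P=b_P^{\min}$ are needed; one is formal adjunction, and the other is where the semi-stable resolution in codimension one built into $\sigma_3:Y_3\to Y_2$ is essential.

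For $b_P\leq b_P^{\min}$: as $(X,B)$ was reduced to the dlt case at the start of the proof of Theorem \ref{main}, the subvariety $W_3\subset X_3$ is a minimal dlt lc center of $(X_3,B_{X_3})$ dominant over $Y_3$, inherited from $W\subset X$ via base change. Since $tf_3^*P$ is pulled back from $Y_3$ and therefore $f_3$-vertical while $W_3$ is horizontal, $W_3$ remains an lc center of $(X_3,B_{X_3}+tf_3^*P)$ for every admissible $t$. Dlt adjunction combined with the normalization $\nu:W_3^\nu\to W_3$ yields
$$
\nu^*\bigl((K_{X_3}+B_{X_3}+tf_3^*P)|_{W_3}\bigr)=K_{W_3^\nu}+B_{W_3^\nu}+tg_3^*P,
$$
and sublc-ness over the generic point of $P$ descends from the upstairs pair to the restriction on the normalized lc center; this gives $b_P^{\min}\geq b_P$.

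For the reverse inequality $b_P\geq b_P^{\min}$, fix $t<b_P^{\min}$. I plan to work on a log resolution $V\to X_3$ chosen so that the composition $V\to Y_3$ is semi-stable in codimension one, and then compute log discrepancies of $(X_3,B_{X_3}+tf_3^*P)$ at prime divisors of $V$. Horizontal primes carry non-negative log discrepancy directly from sublc-ness of $(X_3,B_{X_3})$, so only vertical primes $E\subset V$ over $P$ need attention. Using Fujino's semi-stable MMP relative to $Y_3$ to simplify the vertical boundary, I would then exploit the minimality of $W$ as an lc center of $(X,B)$ to argue that any vertical $E$ with vanishing log discrepancy on $(X_3,B_{X_3})$ meets the birational transform of $W_3$ along an lc place of $(W_3,B_{W_3})$ over $P$. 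The assumed sublc-ness of $(W_3^\nu,B_{W_3^\nu}+tg_3^*P)$ then transfers to non-negativity of the vertical log discrepancies upstairs.

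The principal obstacle is this last correspondence between vertical lc places of $(X_3,B_{X_3})$ over $P$ and lc places of $(W_3,B_{W_3})$ over $P$: it is precisely where the semi-stable minimal model program does its essential work, producing a relative model of $f_3$ on which $B_{X_3}$ has no spurious vertical components over $P$ and so every lc place over $P$ upstairs is either controlled by, or may be replaced by one controlled by, $W_3$. Granted this step, the two inequalities combine to give $b_P=b_P^{\min}$, and hence $\mathbf{B}_{Y_3}=\mathbf{B}^{\min}_{Y_3}$.
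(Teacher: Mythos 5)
Your decomposition into the two inequalities $b_P\leq b_P^{\min}$ and $b_P\geq b_P^{\min}$ is a reasonable frame, and the easy half $b_P\leq b_P^{\min}$ does come out of adjunction to the lc centre (restriction can only improve the lc threshold) — although you should be careful that after the base change $(X_3,B_{X_3})$ is only sublc, not dlt, since $B_{X_3}$ may have negative coefficients, so ``dlt adjunction'' is not literally available. The real problem is the reverse inequality, which is an inversion-of-adjunction statement along a non-divisorial lc centre with respect to a vertical divisor, and your sketch leaves it unfilled: you write ``Granted this step,'' but that step is the entire mathematical content of the claim.

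The paper's mechanism is also genuinely different from the ``correspondence of vertical lc places'' you propose, and it does not pass through two separate inequalities at all. After cutting $Y_3$ down to a curve and taking a log resolution $(V,B_V)\to(X_3,B_{X_3})$ with $\pi:V\to Y_3$, one singles out exactly those vertical prime divisors $E_i$ over $\Sigma$ whose coefficient in $(B_V+\sum_P b_P\pi^*P)^{\geq 0}$ is $<1$, and runs the MMP with ample scaling for $K_V+(B_V+\sum_P b_P\pi^*P)^{\geq 0}+\varepsilon\sum_i E_i$, first over $X_3$ and then over $Y_3$. This contracts the effective divisor $E$ together with all the $E_i$, producing a $\mathbb Q$-factorial dlt model $V'$ over $Y_3$ satisfying the key inequality $(\spadesuit)$: every vertical prime over $\Sigma$ appears in $B_{V'}+\sum_P b_P\pi'^*P$ with coefficient exactly $1$. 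One then invokes the $B$-birational equivalence of minimal lc centres (Fujino's Claims $(A_n)$ and $(B_n)$ from the abundance paper) to produce a minimal lc centre $S_0$ of $(V',B_{V'}+\sum_P b_P\pi'^*P)$ with a $B$-birational map from $(W_3,B_{W_3}+\sum_P b_P g_3^*P)$; since $(S_0,B_{S_0})$ is lc and not klt over each $P\in\Sigma$ by $(\spadesuit)$, the same holds for $(W_3,B_{W_3}+\sum_P b_P g_3^*P)$, giving $b_P=b_P^{\min}$ outright. The two nonroutine ingredients — the precise choice of the $E_i$ that makes the MMP deliver $(\spadesuit)$, and the $B$-birational comparison of minimal lc centres — are exactly what is missing from your proposal, so the argument as written is incomplete.
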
 
\begin{proof}[Proof of Claim] 
By taking general hyperplane cuts, we may assume that $Y_3$ is a curve. 
We write 
$$
\mathbf B_{Y_3}=\sum _P(1-b_P)P \quad {\text{and}}\quad \mathbf B_{Y_3}^{\min}=\sum _P(1-b_P^{\min})P. 
$$
Let $\varphi: (V, B_V)\to (X_3, B_{X_3})$ be a resolution of $(X_3, B_{X_3})$ with the following properties: 
\begin{itemize}
\item $K_V+B_V=\varphi^*(K_{X_3}+B_{X_3})$; 
\item $\pi^*Q$ is a reduced simple normal crossing divisor on $V$ for every $Q\in Y_3$, where 
$\pi: V\to X_3\to Y_3$; 
\item $\Supp \pi^*Q\cup \Supp B_V$ is a simple normal crossing divisor on $V$ for every $Q\in Y_3$; 
\item $\pi$ is projective.  
\end{itemize}
Let $\Sigma$ be a reduced divisor on $Y_3$ such that $\pi$ is smooth over $Y_3\setminus \Sigma$ and 
that $\Supp 
B_V$ is relatively normal crossing over $Y_3\setminus \Sigma$. 
We consider the set of prime divisors $\{E_i\}$ where $E_i$ is a prime divisor on $V$ such that 
$\pi(E_i)\in \Sigma$ and 
$$\mult _{E_i} (B_V+\sum _{P\in\Sigma}b_P\pi^*P)^{\geq 0}<1. $$ 
We run the minimal model programs with ample scaling with respect to 
$$
K_V+(B_V+\sum _{P\in \Sigma}b_P\pi^*P)^{\geq 0}+\varepsilon \sum_i E_i 
$$ 
over $X_3$ and $Y_3$ for some small positive rational number $\varepsilon$. 
Note that 
$$
(V, (B_V+\sum _P b_P\pi^*P)^{\geq 0}+\varepsilon \sum _i E_i)
$$ 
is a $\mathbb Q$-factorial dlt pair because $0<\varepsilon \ll 1$. 
We set $$E=-(B_V+\sum _P b_P\pi^*P)^{\leq0}+\varepsilon \sum_i E_i. $$
Then it holds that   
$$
K_V+(B_V+\sum _Pb_P\pi^*P)^{\geq 0}+\varepsilon \sum_i E_i \sim _{\mathbb Q, Y_3}E\geq 0. 
$$ 
First we run a minimal model program with ample scaling with respect to 
$$
K_V+(B_V+\sum _Pb_P\pi^*P)^{\geq 0}+\varepsilon \sum_i E_i \sim _{\mathbb Q, X_3}E\geq 0
$$
over $X_3$. Note that every irreducible component of 
$E$ which is dominant onto $Y_3$ is exceptional 
over $X_3$ by the construction. 
Thus, if $E$ is dominant onto $Y_3$, then it is not 
contained in the relative movable cone over $X_3$. Therefore, 
after finitely many steps, we may assume that every irreducible 
component of $E$ is contained in a fiber over $Y_3$ (see, 
for example, \cite[Theorem 2.2]{fujino3}). Next we run a minimal model 
program with ample scaling with respect to 
$$
K_V+(B_V+\sum _Pb_P\pi^*P)^{\geq 0}+\varepsilon \sum_i E_i \sim _{\mathbb Q, Y_3}E\geq 0
$$ 
over $Y_3$. 
Then the minimal model program terminates at $V'$ (see, for example, \cite[Theorem 2.2]{fujino3}). 
Note that all the components of $E+\sum _i E_i$ are contracted by 
the above minimal model programs. 
Thus, we have 
$$
K_{V'}+(B_{V'}+\sum _P b_P{\pi'}^*P)^{\geq 0}\sim _{\mathbb Q, Y_3}0, 
$$
where $\pi':V'\to Y_3$ and $B_{V'}$ is the pushforward of $B_V$ on $V'$. 
Note that $B_{V'}+\sum _Pb_P\pi'^*P$ is effective 
since $\Supp(E+\sum _i E_i)$ is contracted by the above minimal model programs. 
Of course, we see that 
$$
(V', (B_{V'}+\sum _P b_P\pi'^*P)^{\geq 0})=(V', B_{V'}+\sum _P b_P\pi'^*P)
$$ 
is a $\mathbb Q$-factorial dlt pair. 
By the construction, the induced proper birational map 
$$
(V, B_V+\sum _Pb_P\pi^*P)\dashrightarrow (V', B_{V'}+\sum _P b_P \pi'^*P)
$$ 
over $Y_3$ is $B$-birational (see \cite[Definition 1.5]{fujino-abun}), that is, 
we have a common resolution 
\begin{equation*}
\xymatrix{ & Z\ar[dl]_{a} \ar[dr]^{b}\\
 V \ar@{-->}[rr]  & & V'}
\end{equation*} 
over $Y_3$ such that 
$$
a^*(K_V+B_V+\sum _{P\in \Sigma}b_P\pi^*P)=b^*(K_{V'}+B_{V'}+\sum _{P\in \Sigma}b_P\pi'^*P). 
$$
Let $S$ be any log canonical center of $(V', B_{V'}+\sum _Pb_P{\pi'}^*P)$ which is 
dominant onto $Y_3$ and is minimal over the generic point of $Y_3$. 
Then $(S, B_{S})$, where 
$$
K_{S}+B_{S}=(K_{V'}+B_{V'}+\sum _Pb_P{\pi'}^*P)|_{S}, 
$$ 
is not klt but lc over every $P\in \Sigma$ since it holds that 
\begin{align}\label{shiki1} 
B_{V'}+\sum _{P\in \Sigma}b_P\pi'^*P\geq \sum _{P\in \Sigma}\pi'^*P. \tag{$\spadesuit$}
\end{align}
Note that \eqref{shiki1} follows from the fact that all the components of $\sum _i E_i$ are contracted 
in the minimal model 
programs. 
Let $g_3:(W_3, B_{W_3})\to Y_3$ be the induced klt-trivial fibration from $(W^\nu_2, 
B_{W^\nu_2})\to Y_2$ by $\sigma_2:Y_3\to Y_2$. 
By \cite[Claims $(A_n)$ and $(B_n)$ in the proof of Lemma 4.9]{fujino-abun},  there is a log canonical center 
$S_0$ of $(V', B_{V'}+\sum _Pb_P\pi'^*P)$ which is dominant onto $Y_3$ and is minimal 
over the generic point of $Y_3$ such that 
there is a $B$-birational map 
$$
(W_3, B_{W_3}+\sum _{P\in\Sigma}b_Pg_3^*P)\dashrightarrow (S_0, B_{S_0})
$$ 
over $Y_3$, where 
$$
K_{S_0}+B_{S_0}=(K_{V'}+B_{V'}+\sum _{P\in \Sigma}b_P\pi'^*P)|_{S_0}.
$$
This means that there is a common resolution 
\begin{equation*}
\xymatrix{ & T\ar[dl]_{\alpha} \ar[dr]^{\beta}\\
 W_3 \ar@{-->}[rr]  & & S_0}
\end{equation*} 
over $Y_3$ such that 
$$
\alpha^*(K_{W_3}+B_{W_3}+\sum _Pb_P g_3^*P)=\beta^*(K_{S_0}+B_{S_0}). 
$$
This implies that $b_P=b_P^{\min}$ for every 
$P\in \Sigma$. Therefore, we have $\mathbf B_{Y_3}=\mathbf B_{Y_3}^{\min}$. 
\end{proof}
Then we obtain 
$$\mathbf M_{Y_3}\sim _{\mathbb Q}\mathbf M_{Y_3}^{\min}=\sigma_3^*\mathbf M_{Y_2}^{\min}
$$ 
because 
$$
K_{Y_3}+\mathbf M_{Y_3}+\mathbf B_{Y_3}\sim _{\mathbb Q} K_{Y_3} 
+\mathbf M_{Y_3}^{\min} +\mathbf B_{Y_3}^{\min}. 
$$
Thus, $\mathbf M_{Y_3}$ is nef and abundant. 
Since 
$$
\mathbf M_{Y_3}=\sigma_3^*\mathbf M_{Y_2}=\sigma_3^*\sigma_2^*\mathbf M_{Y_1}, 
$$ 
$\mathbf M$ is b-nef and abundant. 
Moreover, by replacing $Y_3$ with a suitable 
generically finite cover, we have that 
$\mathbf M_{Y_3}$ and $\mathbf M_{Y_3}^{\min}$ are both Cartier 
(see \cite[Lemma 5.2 (5), Proposition 5.4, and Proposition 5.5]{ambro1}) 
and $\mathbf M_{Y_3}\sim \mathbf M_{Y_3}^{\min}$.  
\end{proof}

We close this paper with a remark on the b-semi-ampleness of $\mathbf M$. 
For some related topics, see \cite[Section 3]{fujino10}. 

\begin{rem}[b-semi-ampleness]\label{41}
Let $f:X\to Y$ be a projective surjective morphism between normal projective varieties with 
connected fibers. Assume that 
$(X, B)$ is log canonical and $K_X+B\sim _{\mathbb Q, Y}0$. Without loss of 
generality, we may assume that $(X, B)$ is dlt by taking a dlt blow-up. We set 
$$
d_f(X, B)=\left\{\dim W-\dim Y  \left|
\begin{array}{l}  {\text{$W$ is a log canonical center of}}\\
{\text{$(X, B)$ which is dominant onto $Y$}} 
\end{array}\right. \right\}. 
$$
If $d_f(X, B)\in \{0, 1\}$, then the b-semi-ampleness of the moduli part $\mathbf M$ 
follows from \cite{kawamata} and \cite{ps} by the 
proof of Theorem \ref{main}. Moreover, it is obvious that $\mathbf M\sim _\mathbb Q 0$ when $d_f(X, B)=0$. 
\end{rem} 

\end{document}